\newtheorem{theorem}{Theorem}[section]
\newtheorem{lemma}[theorem]{Lemma}
\newtheorem{claim}{claim}
\newtheorem{conclusion}{conclusion}
\newtheorem{conjecture}{Conjecture}
\theoremstyle{definition}
\begin{document}
\begin{center}

\textbf{A Search of The Four-Color Theorem and its Higher Dimensional Generalization} \\
\centerline{\scshape Qizhi Wang }
\vspace{1pt}
School of Mathematical Sciences, Fudan University\\
Yangpu District, Shanghai 200433, China\\
qizhiwang12@fudan.edu.cn

\end{center}

\vspace{1pt}

\textbf{Abstract:} Four-Color Theorem has secret in its logical proof and actual operating. In this paper we will give a proof of Four-Color Theorem based on Kuratowski's Theorem using some induction argument and give a description of the most complicated coloring map, a simple proof of Kuratowski's Theorem using Euler charateristic is also presented. We also conjecture the higher dimensional generalization of Four-Color Theorem.\\

\textbf{MSC2010. \ } 05C90\\

\textbf{Key words:} Four-Color Theorem, Higher dimension, Kuratowski's Theorem.

\section{Introduction}
In 1852, Francis Guthrie presented the well-known Four-Color Conjecture. Alfred Kempe gave his proof in 1879 but was spot an error and took a hundred years for the mathematicians to fix it. Though we have a computer-aided proof in 1976 by Appel and Haken and then a more explicit proof by Robertson et al. in 1996.([3])Also another proof in 2008 by Georges Gonthier.([1])  But we are still searching for a simpler proof.

For simplicity, we consider a connected map, without enclaves or exclaves, we can draw it in a plane as a graph, and we call it a planar graph. If four colors are sufficient to color the map such that no two regions with a common edge are colored with the same color, but it is permitted for two regions to be colored the same if they only have one or several vertexes in common, then we call the map is 4-colorable.

The Four-Color conjecture is the following theorem we want to prove.

\begin{theorem}
Any planar graph (map) is 4-colorable.
\end{theorem}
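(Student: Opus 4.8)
The plan is to pass to the dual and prove the equivalent statement that the vertices of a planar graph can be properly $4$-colored, then argue by induction on the number of regions (equivalently, on the number of vertices). First I would record the standard consequence of Euler's formula $V - E + F = 2$: a simple planar graph with at least three vertices satisfies $E \leq 3V - 6$, so its average degree is strictly below $6$ and there is always a vertex $v$ with $\deg(v) \leq 5$. I would also reduce without loss of generality to maximal planar graphs (triangulations), since adding edges can only make a coloring harder to produce. Kuratowski's Theorem enters here to certify planarity and, more usefully, to forbid local configurations: no subdivision of $K_5$ or $K_{3,3}$ can be embedded, which constrains how the neighbors of a low-degree vertex may be joined to one another.

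The inductive step would delete such a low-degree vertex $v$, obtain a proper $4$-coloring of the smaller graph from the induction hypothesis, and then try to extend it across $v$. The cases $\deg(v) \leq 3$ are immediate, since the neighbors of $v$ use at most three colors and a fourth remains available. When $\deg(v) = 4$ and all four colors already appear among the neighbors, I would invoke a Kempe-chain recoloring: choosing a pair of opposite neighbors, the two-colored subgraph they span either does or does not connect them, and in either case swapping colors on the relevant component frees a color for $v$. The genuinely delicate case is $\deg(v) = 5$.

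The main obstacle, and in honesty the entire substance of the theorem, is exactly the degree-$5$ vertex. The tempting move is to perform two independent Kempe-chain swaps among the five surrounding neighbors, but the two recolorings can interfere with each other; this is precisely the flaw that invalidated Kempe's 1879 argument, as Heawood pointed out. I expect the naive induction to break down here, which is why every known proof replaces the single-vertex reduction by an \emph{unavoidable set of reducible configurations} analyzed via a discharging argument. My intended route is to use the planarity restrictions from Kuratowski's Theorem together with an explicit description of the \emph{most complicated} coloring maps to rule out the simultaneous persistence of the obstructing degree-$5$ configurations; but I fully anticipate that making this rigorous, rather than retreating to a large finite case check, is where the real difficulty and the real risk of the argument lie.
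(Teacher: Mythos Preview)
Your route and the paper's diverge at the very first move. You pass to the dual and run the classical Kempe induction: Euler's formula yields a vertex $v$ with $\deg(v)\le 5$; delete $v$, color the rest by induction, and extend across $v$, handling $\deg(v)=4$ with a single Kempe-chain swap and candidly conceding that $\deg(v)=5$ is where the argument stalls (Heawood's tangled-chain objection). The paper does none of this. Its induction is on the number of faces directly, and the inductive step never isolates a low-degree vertex and never touches Kempe chains. Instead, given a map $M_1$ with $k+2$ faces, the paper selects a face $X$ that is non-adjacent to some other face $Y$ (such a pair must exist, else the dual would contain $K_5$, contradicting Kuratowski), sets $M_2 = M_1 - X$, colors $M_2$ by the inductive hypothesis, and then re-applies the hypothesis to the $(k+1)$-face map $(M_2 - Y)\cup X$ ``based on'' the coloring already assigned to $M_2 - Y$; since $X \parallel Y$, it declares that $Y$ keeps its old color and $M_1$ is colored. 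The paper's Section~5 then exhibits what it calls the ``most complicated'' adjacency pattern compatible with the $K_5$ obstruction and argues that this extremal pattern is $4$-colorable.

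Your self-diagnosis is accurate: the degree-$5$ case is a genuine gap in your plan, and nothing short of the unavoidable-set/discharging machinery (or an equivalent large finite case analysis) is known to close it. For comparison, you should be aware that the paper's argument has its own gap at the re-invocation step. The inductive hypothesis supplies \emph{some} $4$-coloring of $(M_2 - Y)\cup X$, not one that extends the fixed coloring of $M_2 - Y$; if the new coloring alters the colors of faces adjacent to $Y$, then $Y$'s retained color may now conflict. The paper's Claim~2 is meant to upgrade the hypothesis to precisely this extension property, but that strengthened statement is what is at stake and is not established independently --- the passage from $k+1$ to $k+2$ assumes the very extension property it is trying to propagate. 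So neither approach, as written, completes the proof; yours at least names its own obstruction, and the obstruction you name is the historically correct one.
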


\section{Notation and Method}
As in the graph theory, we call the regions on the map faces. While the edge means the boundary line between two faces, the vertex means the point where three or more faces share on the boundary lines. So that we can treat the 4-color problem as a graph theory problem. In the following we mean a map the same as a graph.

The color we use will be denoted by $a, b, c, d, etc$. If n colors are enough to color the map, we call it n-colorable.

We denote $A\mapsto a$ to mean that $A$ is colored by the color $a$. If two faces $A,B$ share an edge we denote $A\leftrightarrow B$.

We will use Kuratowski's Theorem ([2]) in graph theory and then use some induction arguments to prove the Four-Color Theorem. We first give a simple topological proof which uses a method called polyhedral model.

\section{A Simple Proof of Kuratowski's Theorem for $K_{5}$}
The first thing we want to prove is that if a map has five faces, then it is 4-colorable. If this map is 5-colorable, then every two faces have one edge in common. In fact if we have to use five colors to color a map with five faces, then every two faces have to share an edge so as to distinguish them in two colors , to represent in dual graph, it is isomorphic to $K_{5}$.
But it is the famous Kuratowski's Theorem showing that a 5-colorable map won't exist for which has only five faces.

\begin{theorem}{(Kuratowski)}
A finite graph is planar if it does not contain a subgraph that is a subdivision of $K_{5}$ or $K_{3}^{3}$.
\end{theorem}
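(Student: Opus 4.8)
The plan is to separate the statement into its two logically distinct halves and to see how far the Euler characteristic---the tool announced in the abstract---can carry each one. The elementary half asserts that the presence of a subdivision of $K_5$ or $K_{3,3}$ already forces non-planarity; this is exactly what is needed for the five-face coloring argument of the previous section, where the dual of a hypothetical $5$-colorable five-face map is $K_5$. First I would record Euler's formula $V-E+F=2$ for any connected graph drawn in the plane. Counting incidences between edges and faces, every face is bounded by at least three edges and every edge borders at most two faces, so $2E\ge 3F$; substituting $F\le 2E/3$ into Euler's formula yields the bound $E\le 3V-6$. For a bipartite graph every face is bounded by at least four edges, giving $2E\ge 4F$ and hence the sharper bound $E\le 2V-4$.

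With these two inequalities the non-planarity of the obstructions is immediate. For $K_5$ one has $V=5$ and $E=10$, yet $3V-6=9<10$, a contradiction; for $K_{3,3}$, which is bipartite, one has $V=6$ and $E=9$, yet $2V-4=8<9$. Next I would observe that subdividing an edge (inserting degree-two vertices along it) changes neither the planarity nor the non-planarity of a graph, since a plane drawing of one yields a plane drawing of the other by following the same arcs; likewise any graph is non-planar once it contains a non-planar subgraph, because deleting vertices and edges cannot destroy a plane drawing. Combining these observations gives the forward implication: a graph that contains a subdivision of $K_5$ or $K_{3,3}$ cannot be planar.

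The genuine content of Kuratowski's theorem, however, is the converse direction that is the literal meaning of the statement as written---namely that a graph with no such subdivision must be planar---and here Euler characteristic by itself does not suffice. This is where the main obstacle lies. The standard route I would follow is a minimal-counterexample argument: assume $G$ is a non-planar graph with the fewest edges that still contains no Kuratowski subdivision, argue that $G$ may be taken $3$-connected (handling cut-vertices and $2$-separations by gluing plane embeddings of the smaller pieces along the separating vertices, each piece being planar by minimality), and then exploit the structure of $3$-connected graphs, for instance by contracting a suitable edge $e$ to obtain a smaller $3$-connected graph $G/e$ which is planar by minimality, followed by a case analysis that either extends the embedding of $G/e$ back across $e$ planarly or else exhibits an explicit $K_5$- or $K_{3,3}$-subdivision in $G$. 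The delicate step, and the part I expect to be hardest to make rigorous, is precisely this last case analysis around the contracted edge: one must show that the cyclic order of neighbours forced by the plane embedding of $G/e$ either leaves room to reinsert $e$ without crossings or else pins down one of the two forbidden configurations, and it is here that the purely counting-based Euler argument must be supplemented by honest structural graph theory.
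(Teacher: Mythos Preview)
Your proposal is correct and considerably more complete than what the paper actually does. The paper, despite its section title, does not prove Kuratowski's theorem at all: it only establishes the non-planarity of $K_5$ (Theorem~3.2, phrased as ``the five-faces 5-colorable cubic graph won't exist''), says nothing about $K_{3,3}$, and makes no attempt at the converse direction that the stated ``if'' actually asserts.

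Even on the overlapping piece---non-planarity of $K_5$---your route differs from the paper's. You use the standard global edge-count $E\le 3V-6$ derived from $2E\ge 3F$ together with Euler's formula, then plug in $V=5$, $E=10$. The paper instead passes to a ``cubic'' polyhedral model with the bottom face removed (so that $v-e+f=1$), exhibits by hand what it claims is the unique way to make three of the five faces adjacent to all others (Figure~1), observes that the two remaining faces are still non-adjacent, and then argues that inserting the missing boundary edge $MN$ adds one edge and two vertices, forcing $v-e+f=2$ and contradicting the normalized Euler relation. Your argument is cleaner and avoids the unjustified uniqueness claim about Figure~1; the paper's argument is more pictorial but rests on an ad~hoc case enumeration. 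Your treatment of $K_{3,3}$ via the bipartite bound $E\le 2V-4$, and your outline of the hard converse via a minimal $3$-connected counterexample and edge contraction, go well beyond anything the paper supplies; you are right that the Euler characteristic alone cannot carry that direction, and the paper simply does not address it.
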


In this section, we will use a polyhedral cubic graph method instead of the planar graph. We call it cubic graph for brevity.

Take a cubic polyhedral as a rubber surface with empty intermediate, cut off its undersurface, then unfold it fully into the plane we will get a planar graph.

In this part we consider the face as connected open set. Although the shape of their corresponding faces change largely, the adjacent situation does not change at all. If two faces on the cubic graph share the same edge, the two regions on the planar graph will keep adjacent; if two faces on the cubic graph don't have the same edge, the regions on planar unfolding graph also won't have the same edge. So the transformation between cubic graph and planar unfolding graph will keep the adjacent situation between faces equivalent, if we want to study the law of planar graph coloring, we can instead study the law of cubic graph coloring. And then unfold it to show the coloring situation of the planar graph. And this method seems to give more insight for us.

First observe that if a cubic graph has five faces and it is  5-colorable, then the faces must be contiguous to each other. Otherwise it can be colored in less than 5 colors because we have conjugate faces on the graph.

Now we want to show such a cubic graph won't exist by contradiction.

\begin{lemma}(Euler formula)
For a cubic graph of our consideration, denote the number of its vertices, edges, and faces by $v, e, f$ respectively, we have the following formula by classical Euler formula (note that we have cut off the undersurface):
$v-e+f=1$.
\end{lemma}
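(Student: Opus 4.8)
The plan is to reduce the statement to the classical Euler formula for a closed polyhedron and then account for the single face removed during the unfolding construction. The lemma itself signals this route with the parenthetical ``note that we have cut off the undersurface,'' so the substantive work is bookkeeping rather than a fresh topological argument.

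First I would recall that for a convex (more generally, simply connected) polyhedron the classical Euler formula reads $V-E+F=2$, where $V,E,F$ count all vertices, edges, and faces of the closed surface. The cubic graph under consideration is obtained from such a polyhedron by cutting off exactly one face, the undersurface, and unfolding the remaining rubber surface flat into the plane. As argued in the preceding discussion, this unfolding alters none of the incidences among the surviving faces, edges, and vertices. Next I would track each quantity through the cut: flattening leaves every vertex and every edge intact, so $v=V$ and $e=E$, while only the removed face is no longer counted among the bounded regions, so $f=F-1$. Substituting into the classical identity yields $v-e+f=V-E+(F-1)=2-1=1$, which is the claim.

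To avoid invoking the polyhedral formula purely as a black box, I would also sketch a direct induction on the number of edges of the planar cubic graph. The base case is a single vertex, for which $v-e+f=1-0+0=1$; more generally any tree satisfies the formula, since a tree has $e=v-1$ and no bounded faces, giving $v-(v-1)+0=1$. The inductive step adds one edge at a time. An edge joining two vertices already present closes off one new bounded region, raising both $e$ and $f$ by one; an edge introducing a fresh vertex raises both $v$ and $e$ by one. In either case the alternating sum $v-e+f$ is unchanged and connectivity is preserved, so the value $1$ propagates from the base case to the whole graph.

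The main obstacle is not the arithmetic but the careful interpretation of ``face'' after the cut. One must verify that the outer unbounded region of the unfolded planar graph is precisely the hole left by the removed undersurface, so that the correction term ``$-1$'' is neither double-counted nor omitted. Keeping the correspondence between the faces of the polyhedron and the bounded regions of the planar unfolding fully explicit is what makes the reduction rigorous, and it is the one point where I would take care to state the convention that $f$ counts only bounded faces.
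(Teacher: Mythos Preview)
Your proposal is correct, and your primary reduction---apply the classical $V-E+F=2$ and subtract one for the excised undersurface---is exactly the route the paper indicates; indeed the paper offers no formal proof beyond the parenthetical remark in the lemma statement itself. Your supplementary edge-induction and the explicit convention that $f$ counts only bounded regions go beyond anything the paper supplies, but they are compatible with and strictly more detailed than the paper's treatment.
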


If we suppose the five-faces 5-colorable cubic graph exist.

Now we can always construct the cubic graph in two steps:\\
Step 1: construct one cubic graph such that three of its faces is contiguous to its other four faces.\\
Step 2: if the remaining two faces don't be contiguous to each other, build new edge to make them be contiguous.

\begin{figure}[!h]\centering
  \includegraphics[width=0.5\textwidth]{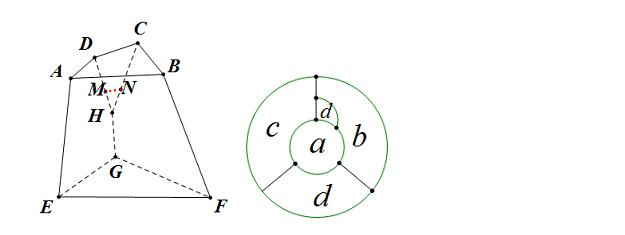}\\
  \caption{}\label{1}

\end{figure}

\begin{theorem}
The five-faces 5-colorable cubic graph won't exist.
\end{theorem}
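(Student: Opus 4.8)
The plan is to assume such a graph exists and force a numerical contradiction straight out of the Euler relation of the Lemma, combined with two edge counts: a lower bound coming from the mutual contiguity of the five faces and an upper bound coming from the local degree structure. Since everything is phrased through $v$, $e$, $f$ alone, this avoids any enumeration of the possible shapes produced by the Step 1 / Step 2 construction. (In dual language this is precisely the statement that $K_5$ is non-planar, and the count below is the Euler-formula proof of that fact.)

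For the upper bound I would feed $f = 5$ into $v - e + f = 1$, obtaining $v = e - 4$. In a polyhedral cubic graph at least three edges meet at every vertex, so $2e = \sum_{\text{vertices}} \deg \geq 3v = 3(e - 4)$, which rearranges to $e \leq 12$.

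For the lower bound I would split the edge set into those edges that separate two of the five faces and those that lie on the boundary of the outer region (the cut-off undersurface), and observe that these two classes are disjoint, since a given edge borders exactly two faces. Pairwise contiguity of the five faces forces the $\binom{5}{2} = 10$ pairs to be witnessed by ten distinct edges of the first class, while the outer region, being a face, is bounded by at least three edges of the second class. Hence $e \geq 10 + 3 = 13$, contradicting $e \leq 12$, and the theorem follows.

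I expect the genuinely delicate point to be the lower bound rather than the upper one. Counting only the ten adjacency edges yields the harmless $10 \leq e \leq 12$; the argument succeeds only because the undersurface's own boundary contributes three \emph{additional} edges that can never double as adjacency edges between two of the five faces. I would therefore be careful to rule out any degeneracy --- multiple edges between the same pair of faces, or a bounded face meeting the outer region along an already-counted edge --- by appealing to the edge--face incidence identity $2e = \sum_{\text{all faces}} \deg(\text{face})$ together with the fact that every face, the outer one included, has degree at least three. Getting this disjointness airtight is exactly where a naive count would silently lose the contradiction.
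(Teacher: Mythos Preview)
Your argument is correct and is essentially the textbook Euler-formula proof that $K_5$ is non-planar, recast in the dual (faces instead of vertices). The paper, by contrast, takes a constructive and rather ad-hoc route: it asserts (without justification) that the only cubic graph realizing Step~1 is the specific configuration in Figure~1, observes that two of its five faces ($\triangle CDH$ and $\square ABFE$) remain non-adjacent, and then argues that physically inserting an extra boundary line $MN$ to join them adds one edge and two vertices, so that $v-e+f = (8+2)-(12+1)+5 = 2$, violating the Lemma.

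Your approach is both more general and more rigorous: it treats all configurations at once via inequalities, it avoids the unproved uniqueness claim about Figure~1, and it makes transparent exactly where the extra edges come from --- ten from the $\binom{5}{2}$ mutual adjacencies and three from the cut-off undersurface, disjoint because every edge borders exactly two regions. The paper's approach is more pictorial and makes the obstruction tangible as a ``bridge'' that cannot be laid flat, but its correctness hinges on an enumeration of Step~1 outcomes that it never carries out, and its edge/vertex bookkeeping for the added line $MN$ is not fully explained. Your worry about the delicate point is well placed: the contradiction really does rest on those three outer-boundary edges being \emph{new}, and the edge--face incidence identity $2e = \sum \deg(\text{face})$ over all six regions (the five faces plus the undersurface) is the clean way to certify that.
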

\begin{proof}
Follow step 1, we see that the only graph we can construct is as figure 1, otherwise it can't be cubic graph.(The coloring situation is also shown.) In this cubic graph, we have five faces: pentagon $AEGHD, BFGHC$, tetragon $ABCD, ABFE$, triangle $CDH$.
But $\triangle CDH$ is not contiguous to $\square ABFE$, we must follow step 2.
As in figure 1, build a new boundary line $MN$ between $\triangle CDH$ and $\square ABFE$. Then we add 1 edge and 2 vertices. So we have
$v-e+f=(8+2)-(12+1)+5=2$.
Which contradicts lemma 3.1.
\end{proof}

\section{The Proof of Four-Color Theorem}
The principle of our proof is that we want to use induction argument, we first show a map with five faces is 4-colorable. Then we want to show that from five faces to six faces, four colors are also sufficient to color the map. And then we can apply the induction to prove the Four-Color Theorem.
So the nonexistence of $K_{5}$ makes using four colors is enough to color this map.

Start with these five faces $A,B,C,D,E$, from the last theorem the most complicated case is that there are only two faces that is not sharing an edge in common (say only D and E do not share an edge). We denote it by $D\parallel E$. For other cases, we can discuss in the same way and are much less restricted, so the other cases don't affect the possibility to color the map with four colors.

By assumption, as only different colors can be used if two faces are contagious. We can set the following:
$A\mapsto a, B\mapsto b, C\mapsto c, D\mapsto d, E\mapsto d$.

If we add a face $F$ which is near the other five faces, we want to show that four colors are enough to color the five faces plus face $F$. First do the coloring as above, as $F$ must fall in one of the following four classes, otherwise some contradiction will appear with the Kuratowski's Theorem.\\
1. $F\parallel A$, and we can do $F\mapsto a$.\\
2. $F\parallel B$, and we can do $F\mapsto b$.\\
3. $F\parallel C$, and we can do $F\mapsto c$.\\
4. $F\parallel D$, it must satisfy $F\parallel E$ and we can do $F\mapsto d$.\\

The first three classes are straightforward, for the last one, if $F\parallel D$ and $F\leftrightarrow E$, then $ABCEF$ is isomorphic to $K_{5}$ and it is a contradiction. If $F$ does not fall in these four cases, also we have $ABCEF$ is isomorphic to $K_{5}$ and it is a contradiction.

Up to this point, we have proved that if a map with 5 faces is 4-colorable, then a map has $(5+1)$ faces with the original 5 faces as its subgraph is 4-colorable. Now we use an induction argument as below.

\begin{claim}
 Assume a map with k faces is 4-colorable can induce that a map with $(k+1)$ faces is 4-colorable, we can prove that a map with $(k+1)$ faces is 4-colorable can induce that a map with $(k+2)$ faces is 4-colorable. Here $k\geq5$.
\end{claim}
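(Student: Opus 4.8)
The plan is to recognize that the passage from five faces to six faces proved just above never used the specific value $5$: it only used that the smaller map was already 4-colored and that adding one face cannot create a subgraph isomorphic to $K_5$. So I would argue that the very same one-step extension is available at every index, which is precisely what the claim asserts. Write $P(n)$ for the statement ``every map with $n$ faces is 4-colorable.'' The hypothesis of the claim is the implication $P(k)\Rightarrow P(k+1)$, and the goal is the implication $P(k+1)\Rightarrow P(k+2)$. Thus I would assume $P(k+1)$ and derive $P(k+2)$, the hypothesis $P(k)\Rightarrow P(k+1)$ serving only to certify that the extension mechanism itself is in force and is to be repeated one index higher.

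First I would fix an arbitrary map $M$ with $k+2$ faces and delete a single face $F$, erasing the boundary that separates $F$ from one chosen neighbour so that $F$ is absorbed into it; this yields a map $M'$ with $k+1$ faces. By the assumed $P(k+1)$, the map $M'$ carries a 4-coloring in $a,b,c,d$. Restoring the erased boundary returns $M$ with every face coloured except $F$. I would then attempt to colour $F$ by reusing the classification from the five-to-six step: sort $F$ by the faces to which it is \emph{not} adjacent (the relation $\parallel$), and use Kuratowski's Theorem to forbid the configuration in which $F$ together with four coloured faces realizes $K_5$. Whenever the faces bordering $F$ omit at least one of the four colours, that colour extends the 4-coloring to $F$, giving $P(k+2)$ and hence the implication $P(k+1)\Rightarrow P(k+2)$.

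The hard part will be the case in which the neighbours of $F$ already exhibit all four colours. In the five-face setting the other faces were forced into near-complete mutual adjacency, so a fifth colour at $F$ produced a $K_5$ and was excluded by Kuratowski's Theorem; but for general $k$ the neighbours of $F$ need only touch $F$, not one another, so four colours among them does not by itself create any subdivision of $K_5$, and Kuratowski's Theorem no longer closes the gap. I therefore expect the genuine obstacle to be exactly here, and the natural remedy is a Kempe-chain recolouring: interchange two of the colours along a two-colour chain meeting $F$ so as to free a colour at $F$. Isolating and proving that such a recolouring is always available would be the main lemma; the remainder of the induction is a faithful copy of the five-to-six argument and I would expect it to go through verbatim.
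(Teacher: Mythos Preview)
Your approach departs from the paper's at the key step. You delete one face $F$, colour the remaining $(k+1)$ faces, and then attempt to colour $F$ directly, reaching for Kempe chains when all four colours already appear among the neighbours of $F$. The paper does something different: it first strengthens the statement to Claim~2 (every $(k+1)$-face map can be 4-coloured \emph{extending any given 4-colouring of any $k$-face submap}) and then uses a two-face swap. In $M_1$ one chooses a face $X$ that is non-adjacent to some other face $Y$ (such a pair must exist, else one finds five mutually adjacent faces and hence a $K_5$, contradicting Kuratowski); removing $X$ gives $M_2$ with $k+1$ faces, which is coloured by hypothesis. Now $M_2-Y$ is an already-coloured $k$-face submap of the $(k+1)$-face map $(M_2-Y)\cup X$, and the strong hypothesis extends that fixed colouring to $X$. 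Since $X\parallel Y$, the colour $Y$ carries from the $M_2$-colouring cannot clash with the new colour of $X$, so all $k+2$ faces of $M_1$ are coloured. The paper thus never confronts the ``all four colours around $F$'' obstruction head-on; it routes around it by invoking the extension hypothesis on a second $(k+1)$-face map that omits $Y$ rather than $X$.

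Your self-identified gap is real and cannot be closed along the line you propose: the assertion that a Kempe-chain interchange always frees a colour at $F$ is precisely Kempe's 1879 argument, and Heawood's 1890 counterexample shows it fails in general. If you wish to reconstruct the paper's argument, the missing ingredients are the non-adjacent pair $X\parallel Y$ and the strengthened inductive hypothesis of Claim~2, which promises an extension of a \emph{given} partial colouring rather than mere existence of some colouring; a recolouring lemma plays no role.
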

In fact we can prove a stronger result. That is the following argument.

\begin{claim}
 Assume a graph with $(k+1)$ faces can be colored in four colors based on any coloring situation of one of its $k$ faces subgraph if any of its $k$ faces subgraph is 4-colorable. Then the same is true for $(k+2)$ faces graph.
\end{claim}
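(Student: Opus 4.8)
The plan is to run the same face-addition argument used for the passage from five to six faces, but now relative to the strengthened hypothesis, so that the recoloring freedom it supplies can be exploited. First I would take an arbitrary planar map $G$ with $(k+2)$ faces together with one of its $(k+1)$-face submaps $H$, so that $G$ is obtained from $H$ by adjoining a single face $F$; write $N(F)=\{X : X\leftrightarrow F\}$ for the set of faces of $H$ adjacent to $F$. By the hypothesis of the claim applied at level $(k+1)$, the map $H$ admits a $4$-coloring, and in fact any $4$-coloring of a $k$-face submap of $H$ extends to a $4$-coloring of $H$ itself. This last property is the extra leverage that the stronger statement provides over Claim 1, and it is what I intend to use below.

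Next I would reduce the whole problem to a single local condition: it suffices to produce a $4$-coloring of $H$ under which the faces of $N(F)$ receive at most three of the four colors, for then the remaining color $a$ is free and the assignment $F\mapsto a$ completes a $4$-coloring of $G$. If some coloring of $H$ already leaves a color unused on $N(F)$, we are finished immediately. The substance of the argument is therefore confined to the case in which every available coloring of $H$ forces $N(F)$ to display all four colors, and the task becomes to break that obstruction using the recoloring freedom recorded in the previous paragraph.

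The hard part, and the step I expect to be the genuine obstacle, is exactly this forced case. If the four faces carrying the four distinct colors were mutually adjacent, then together with $F$ they would realize a subgraph isomorphic to $K_{5}$, which is impossible by the earlier theorem on the nonexistence of the five-face configuration; so at least two of them, say $X$ and $Y$, satisfy $X\parallel Y$. The idea would then be to delete one of these from $H$ to obtain a $k$-face submap $H'$, recolor $H'$ so as to equalize the colors on $X$ and $Y$, and re-extend across $H$ by the hypothesis at level $(k+1)$, thereby freeing a color on $N(F)$. The delicate point, and the precise reason the original $1879$ argument of Kempe failed, is that re-extending the recolored $H'$ may propagate a conflicting color back onto another face of $N(F)$. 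To close the proof I would have to show that this propagation can always be confined away from $N(F)$, invoking planarity through the nonexistence of $K_{5}$ at each reassignment to guarantee that the procedure terminates with a genuinely free color at $F$. This \emph{Kempe-chain control} is where I expect essentially all of the real difficulty to reside, and it is the one place where a superficially complete argument can silently fail.
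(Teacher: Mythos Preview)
Your route diverges from the paper's and runs straight into the obstacle you yourself flag. The paper sidesteps the Kempe-chain difficulty by exploiting the strengthened hypothesis in a different way. Rather than fixing the added face $F$ and trying to free a color on $N(F)$, the paper first picks any two non-adjacent faces $X\parallel Y$ in the $(k+2)$-face map $M_{1}$ (possible, since otherwise $M_{1}$ contains a $K_{5}$), sets $M_{2}=M_{1}\setminus\{X\}$, and colors $M_{2}$ by the hypothesis. It then restricts this coloring to the $k$-face map $M_{2}\setminus\{Y\}$ and invokes the extension hypothesis on the $(k+1)$-face map $(M_{2}\setminus\{Y\})\cup\{X\}=M_{1}\setminus\{Y\}$: this assigns $X$ a color while leaving every face of $M_{2}\setminus\{Y\}$ untouched. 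Because $X\parallel Y$, every neighbor of $Y$ lies in that fixed set, so $Y$ may simply keep its original color and $M_{1}$ is $4$-colored.

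The idea you are missing is that the extension hypothesis, used in this swapped form, demands no control over \emph{which} color the inserted face receives---only that the surrounding $k$ faces stay put. Your plan, by contrast, needs the re-extension to land on a \emph{particular} color pattern on $N(F)$, and that is precisely the control Kempe's 1879 argument fails to deliver. The paper's device---color with $X$ removed, then re-insert $X$ relative to a \emph{different} omitted face $Y$ chosen with $X\parallel Y$---is what replaces your final paragraph and dissolves the Kempe-chain issue within the logic of the inductive hypothesis.
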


The proof of the above induction argument goes as follows.

If we have a map $M_{1}$ with $(k+2)$ faces, first choose a subgraph $M_{2}$ with $(k+1)$ faces such that the extra face $X$ is not contagious with one face $Y$ of the $(k+1)$ faces of $M_{2}$. Otherwise, it is easy to show that we can choose a subgraph that is isomorphic to $K_{5}$. So $M_{1}=M_{2}\bigsqcup{X}$ and $X\parallel Y$, where $Y\in M_{2}$. We can color $M_{2}$ in four colors by the assumption. Then basing on the coloring situation of $M_{2}-Y$, one can color the graph $(M_{2}-Y)\bigcup X$ which have $(k+1)$ faces. While $X\parallel Y$, this does not affect the coloring situation of $Y$. So we have finish the coloring of $M_{1}$.

Thus we finish the induction argument, so we have for any $k\geq5$, a map with $k$ faces is 4-colorable can induce that a map with $(k+1)$ faces is 4-colorable, as a map with five faces is 4-colorable from Kuratowski's Theorem, we get the consequence.

\section{Coloring Process for Complicated Maps}
Though from the last in induction theory it is believed that a map is always 4-colorable, we want to give an insightful proof of 4-color theorem. The principle is that for a map which have the most complicated contagious situation we can produce 4-color coloring. In the following graph, we use a dot on the circle to represent a face. If two faces are contagious we use full line to connect them, while two faces are not contagious we use dotted line to connect them.

As only different colors can be used if two faces are contagious. From the situation with 5 faces we can generalize it to the graph with n faces, the coloring situation for a map with most contagious faces is the following, where different letters means different colors, for example $A_{i}$ is colored by $A$.  The process is that first choose five faces that is most complicated in the last section, namely $A_{1}, B_{1}, C_{1}, D_{1},A_{2}$ from Kuratowski's Theorem, then keep on the process of coloring to the face nearby and we can produce the graph such that $n=i+j+k+l$, and it is indeed the most complicated graph we can draw.

\begin{figure}[!h]\centering
  \includegraphics[width=0.5\textwidth]{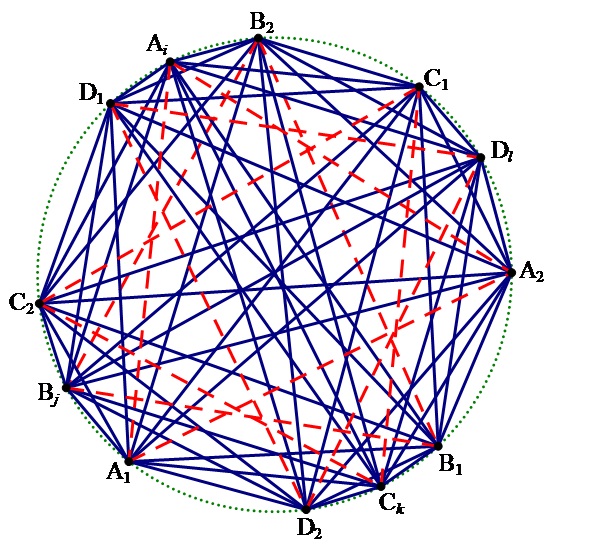}\\
  \caption{}\label{2}

\end{figure}

If this is not the map that have the most contagious faces, one should at least change one dotted line into a full line, which is always a contradiction for Kuratowski's Theorem as one can find five points that is isomorphic to $K_{5}$.

While for the actual situation, there can be much less restricted situation, and have more dotted lines, and so are always 4-colorable by this coloring map. This is why the computer plays a role for the different coloring situation for this complicated classification problem. While we just indicate the existence fact here.

\begin{conclusion}
 The more different boundary lines , the more complicated to color a map. While no matter how complicated the graph is, we always can produce a coloring map within four colors, and we do show how to do this and produce such a map.
\end{conclusion}

\section{The Higher Dimensional Conjecture}
With the development of society, the higher dimensional coloring problem will be useful in the future studying. For example, the 3-D typing technique, the star map in the universe and the 3-D cartoon is truly interesting such problems. With the help of topological and geometric methods, this coloring problem may create more problems.

The third section provides us with some insight that higher dimensional Kuratowski's Theorem may exist using the charateristic theory of higher dimension. Which means that for an n-dimensional graph with $(n+1)$-cells, it cannot be isomorphic to $K_{n+1}$.

So we can conjecture the following:
\begin{conjecture}
Any n-dimensional graph is $(n+2)$-colorable.
\end{conjecture}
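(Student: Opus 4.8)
The plan is to imitate Section 4 as closely as possible: prove the conjecture by induction on the number $N$ of top-dimensional cells of the $n$-dimensional map, where two $n$-cells are declared adjacent when they meet in a common $(n-1)$-cell, and ``$(n+2)$-colorable'' means the dual graph (one vertex per $n$-cell, one edge per adjacency) has chromatic number at most $n+2$. The engine of the whole argument must be a generalized Kuratowski obstruction: no $n$-dimensional map can realize $K_{n+3}$, that is, one cannot have $n+3$ pairwise-adjacent $n$-cells. For $n=2$ this is exactly the non-realizability of $K_5$ used in the third and fourth sections, and $n+2$ specializes to $4$, recovering the Four-Color Theorem, so the indexing is forced by the two-dimensional case. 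Everything below assumes this obstruction; producing it is the crux, and I return to it last.

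Granting the obstruction, the base case is a map with $n+3$ cells: since the full mutual adjacency $K_{n+3}$ is forbidden, some two cells are non-adjacent, may receive the same color, and the remaining $n+1$ cells are colored distinctly, for a total of at most $n+2$ colors. The inductive step transcribes the two-dimensional argument. Given a map $M_1$ with $N+1$ cells, I would choose an $n$-cell $X$ whose deletion leaves an $N$-cell sub-map $M_2$ colored by the inductive hypothesis, arranged so that $X$ fails to be adjacent to at least one cell $Y\in M_2$; were no such pair $X,Y$ to exist, one could extract a realization of $K_{n+3}$, contradicting the obstruction. Since $X$ is then not forced to avoid all $n+2$ colors at once, a free color for $X$ always remains and the coloring extends. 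This is the direct lift of the two induction claims of Section 4 to dimension $n$.

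The generalized Kuratowski obstruction is where I would spend all the effort, and also where I expect the program to stall. The natural attempt mirrors the Euler-formula lemma and the nonexistence theorem of Section 3: realize the putative $K_{n+3}$-complex in the polyhedral model, compute its Euler characteristic $\chi=\sum_i (-1)^i c_i$ from the cell counts $c_i$, and force a numerical contradiction with the value prescribed for an $n$-ball, exactly as $v-e+f=2\neq 1$ did in the plane.

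\textbf{The main obstacle.} The difficulty is that this Euler-characteristic bookkeeping, decisive in dimension $2$, becomes powerless for $n\geq 3$. In the plane $v-e+f$ pins the combinatorics down tightly enough to forbid $K_5$, but from dimension $3$ on $\chi$ no longer controls how many cells can be pairwise adjacent. Indeed Tietze's classical construction produces, for every $m$, a family of $m$ convex solids in three-dimensional space that pairwise share a two-dimensional face; thus $K_m$ is realizable for all $m$ and the chromatic number of three-dimensional maps is unbounded. Hence the generalized obstruction is false as stated and the inductive engine collapses, so the conjecture cannot hold for $n=3$ without further hypotheses. The honest reformulation, and the real content of any higher-dimensional theorem, is to isolate side conditions, namely convexity with controlled shape, a genuine topological invariant replacing $\chi$, or a restriction to the cells of one fixed triangulated $n$-sphere, under which an $(n+2)$-type bound could survive; deciding which of these, if any, rescues a clean statement is the substantive open problem.
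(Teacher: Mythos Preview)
The paper offers no proof of this statement: it is explicitly labeled a \emph{Conjecture}, and the surrounding text only sketches the hope that ``we can follow the way of $n=2$ case'' once an ``associated Kuratowski's Theorem'' is available. So there is nothing in the paper to compare your argument against line by line; the paper's entire contribution here is the suggestion that a higher-dimensional $K_{n+3}$ obstruction plus the induction of Section~4 should do the job.

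Your proposal follows that suggested route faithfully, and then goes further than the paper does: you correctly observe that the program collapses because the needed obstruction is \emph{false} for $n\geq 3$. Tietze's construction of arbitrarily many convex bodies in $\mathbb{R}^3$ pairwise sharing $2$-dimensional faces shows that $K_m$ is realizable for every $m$, so three-dimensional maps have unbounded chromatic number and the conjecture fails already at $n=3$. The paper does not acknowledge this; it treats the higher-dimensional Kuratowski statement as merely ``much more work,'' when in fact it is not a theorem at all. Your diagnosis is the right one, and your closing remark---that any salvageable version must impose extra hypotheses (convexity constraints, restriction to cells of a fixed triangulated sphere, etc.)---is exactly the honest conclusion. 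In short: there is no proof in the paper, your attempted proof correctly identifies why none can exist for the statement as written, and your analysis supersedes the paper's discussion.
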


This conjecture is reasonable, for $n=2$ it is just the Four-Color Theorem. For $n=1$, it is obvious that three colors are enough to separate the line segments in a curve. For the higher dimensional case, it is believed that we can follow the way of $n=2$ case to prove this conjecture but seems much more work for the associated Kuratowski's Theorem.

\section*{Acknowledgments} The author would like to thank his family for supporting him in the study especially his father.

{}
\end{document}